\newtheorem{thm}{Theorem}[section]
\newtheorem{cor}[thm]{Corollary}
\newtheorem{lem}[thm]{Lemma}
\newtheorem{exam}[thm]{Example}
\numberwithin{equation}{section}
\begin{document}

\title{Jacobson's Lemma for the generalized n-strongly Drazin inverse}

\author{Huanyin Chen}
\author{Marjan Sheibani}
\address{
Department of Mathematics\\ Hangzhou Normal University\\ Hang -zhou, China}
\email{<huanyinchen@aliyun.com>}
\address{
Women's University of Semnan (Farzanegan), Semnan, Iran}
\email{<sheibani@fgusem.ac.ir>}

\subjclass[2010]{15A09, 32A65, 16E50.} \keywords{generalized Drazin inverse; generalized n-strongly Drazin inverse; Jacobson's Lemma; ring; Banach algebra.}

\begin{abstract}
Let $n\in {\Bbb N}$. An element $a\in R$ has generalized n-strongly Drazin inverse if there exists $x\in R$ such that $xax=x, x\in comm^2(a), a^n-ax\in R^{qnil}.$
For any $a,b\in R$, we prove that $1-ab$ has generalized n-strongly Drazin inverse if and only if $1-ba$ has generalized n-strongly Drazin inverse. Extensions in Banach algebra are also obtained.
\end{abstract}

\maketitle

\section{Introduction}

Let $R$ be an associative ring with an identity. The commutant of $a\in R$ is defined by $comm(a)=\{x\in
R~|~xa=ax\}$. The double commutant of $a\in R$ is defined
by $comm^2(a)=\{x\in R~|~xy=yx~\mbox{for all}~y\in comm(a)\}$. Set $R^{qnil}=\{a\in R~|~1+ax\in U(R)~\mbox{for
every}~x\in comm(a)\}$. An element $a\in R$ is quasinilpotent if
$a\in \grave{R^{qnil}}$. For a Banach algebra $\mathcal{A}$ it is well known
 that $$a\in \mathcal{A}^{qnil}\Leftrightarrow
\lim\limits_{n\to\infty}\parallel a^n\parallel^{\frac{1}{n}}=0.$$
An element $a$ in $R$ is said to have generalized Drazin inverse if there exists $x\in R$ such that $$x=xax, x\in comm^2(a), a-a^2x\in R^{qnil}.$$  The preceding $x$ is unique, if such element exists. As usual,
it will be denoted by $a^{d}$, and called the generalized Drazin inverse of $a$. Generalized Drazin inverse is extensively studied in matrix theory and Banach algebra (see~\cite{B, J, LZ} and~\cite{ZC}).

Let $n\in {\Bbb N}$. Following Mosic (see~\cite{M}), an element $a\in R$ has generalized n-strongly Drazin inverse if there exists $x\in R$ such that $$xax=x, x\in comm^2(a), a^n-ax\in R^{qnil}.$$ In this case, $x$ is a generalized n-strongly Drazin inverse of $a$.

The topic for generalized n-strongly Drazin inverse was studied by many authors. In ~\cite{CS, G, Mo} and ~\cite{W}, Chen et al. studied such generalized inverse for $n=1$. In~\cite{CM, CM1} and~\cite{CM2}, the authors investigated this generalized inverse for $n=2$. For a Banach algebra $\mathcal{A}$, we proved that
$a\in \mathcal{A}$ has generalized 2-strongly Drazin inverse if and only if $a-a^3\in \mathcal{A}^{qnil}$ if and only if $a$ is the sum of a tripotent and a quasinilpotent that commute (see~\cite[Theorem 2.4 and Theorem 2.7]{CM2}). In~\cite{ZM}, Zou considered the generalized n-strongly Drazin inverse in terms of nilpotents.

Jacobson's Lemma was initially a statement for the classical inverse in a ring. It claims that $1-ab$ is invertible if and only if $1-ba$ is invertible.
It was then extended to inner inverses, group inverses and Drazin inverse. More recently, it was generalized to generalized Drazin inverses. That is, $1-ab$ has generalized Drazin inverse if and only if $1-ba$ has generalized Drazin inrverse (see~\cite[Theorem 2.3]{ZC}).

In~\cite[Theorem 2.4]{M}, Mosic investigated Jacobson's Lemma for generalized 1-strongly Drazin inverse. The purpose of this paper is to extend Jacobson's Lemma from generalized Drazin inverse to generalized n-strongly Drazin inverse. We shall prove that $1-ab$ has generalized n-strongly Drazin inverse if and only if $1-ba$ has generalized n-strongly Drazin inverse. So as to study the common spectral properties for bounded linear operators, certain generalizations of Jacobson's Lemma are investigated (see~\cite{MZ, Mos, Y1, Y, Z} and~\cite{ZZ}). In the last section, two extensions for the generalized n-strongly Drazin inverse in a Banach algebra are also obtained.

Throughout the paper, all rings are associative with an identity and all Banach algebras are complex. We use $n$ to denote a fixed natural number. $U(R)$ stands for the set of all units in a ring $R$.

\section{Jacobson's Lemma}

The aim of this section is to extend Theorem 2.3 in~\cite{ZC} to generalized n-strongly Drazin inverse. We begin with

\begin{lem} Let $R$ be a ring, and let $a,b\in R$. Then $ab\in R^{qnil}$ if and
only if $ba\in R^{qnil}$.\end{lem}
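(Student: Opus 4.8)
The plan is to prove the forward implication $ab\in R^{qnil}\Rightarrow ba\in R^{qnil}$; the converse then follows at once by interchanging the roles of $a$ and $b$. By definition, $ba\in R^{qnil}$ means precisely that $1+(ba)y\in U(R)$ for every $y\in comm(ba)$, so I would fix an arbitrary $y\in R$ with $(ba)y=y(ba)$ and show $1+(ba)y\in U(R)$.

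First I would observe that the element $ayb$ lies in $comm(ab)$: a short computation gives $(ab)(ayb)=a\big((ba)y\big)b=a\big(y(ba)\big)b=(ayb)(ab)$, where we used that $y$ commutes with $ba$. Since $ab\in R^{qnil}$, applying the definition of $R^{qnil}$ to this commuting element yields $1+(ab)(ayb)\in U(R)$. Next I would feed this unit into the classical Jacobson Lemma for the ordinary inverse (recalled in the Introduction): writing $1+(ab)(ayb)=1+a(bayb)$, invertibility of $1+a(bayb)$ is equivalent to invertibility of $1+(bayb)a$, and $(bayb)a=(ba)y(ba)=(ba)^2y$ again because $y$ commutes with $ba$. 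Hence $1+(ba)^2y\in U(R)$; since $y$ was an arbitrary element of $comm(ba)$, we have in fact established that $1+(ba)^2z\in U(R)$ for every $z\in comm(ba)$.

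It then remains to descend from squares to first powers. Given $y\in comm(ba)$, the element $-y^2$ also lies in $comm(ba)$, so the previous step gives $1-(ba)^2y^2\in U(R)$. As $ba$ and $y$ commute, $\big(1+(ba)y\big)\big(1-(ba)y\big)=\big(1-(ba)y\big)\big(1+(ba)y\big)=1-(ba)^2y^2\in U(R)$, and since the two factors $1\pm(ba)y$ commute with each other, $1+(ba)y$ is a unit with inverse $\big(1-(ba)y\big)\big(1-(ba)^2y^2\big)^{-1}$. This shows $ba\in R^{qnil}$, and by symmetry the proof is complete.

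The two displayed product identities are routine bookkeeping. The one genuinely nontrivial point is recognizing that the quasinilpotency of $ab$, when applied to the commuting element $ayb$ and transported via Jacobson's Lemma, only produces information about $(ba)^2y$ rather than $(ba)y$ directly — so the square-to-first-power reduction in the final paragraph is the real content and cannot be bypassed.
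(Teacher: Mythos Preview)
Your argument is correct. The paper itself does not give a proof of this lemma at all: it simply cites \cite[Lemma 2.2]{LZ}. What you have written is in fact essentially the standard proof of that cited result --- construct $ayb\in comm(ab)$ from a given $y\in comm(ba)$, apply the quasinilpotency of $ab$ together with the classical Jacobson Lemma to obtain invertibility of $1+(ba)^2z$ for all $z\in comm(ba)$, and then factor $1-(ba)^2y^2=\big(1+(ba)y\big)\big(1-(ba)y\big)$ to descend to first powers. So your proposal is not a different route from the paper's; it is a fleshed-out version of the reference the paper defers to, and your identification of the square-to-first-power reduction as the crux is accurate.
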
 \begin{proof}  See~\cite[Lemma 2.2]{LZ}.\end{proof}

\begin{lem} Let $R$ be a ring, $a\in R$ and $n\in {\Bbb N}$. Then the following are equivalent:\end{lem}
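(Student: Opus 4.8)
The plan is to prove the stated equivalences by the cycle $(1)\Rightarrow(3)\Rightarrow(2)\Rightarrow(1)$, where I expect the three conditions to be, in some order: (1) $a$ has generalized $n$-strongly Drazin inverse; (2) $a$ has generalized Drazin inverse and $a^n-aa^d\in R^{qnil}$; (3) there is an idempotent $e\in comm^2(a)$ with $a^n-e\in R^{qnil}$. (If the lemma also records the split form ``$ae$ is a unit of $eRe$ and $a(1-e)\in R^{qnil}$'', that drops out of the $(3)\Rightarrow(2)$ argument below.)

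For $(1)\Rightarrow(3)$: given a generalized $n$-strongly Drazin inverse $x$ of $a$, set $e=ax$. Since $a\in comm(a)$ and $x\in comm^2(a)$ we get $ax=xa$, hence $e^2=(ax)(ax)=a(xax)=ax=e$; and for $z\in comm(a)$ we have $zx=xz$, so $ze=zax=azx=axz=ez$, i.e.\ $e\in comm^2(a)$. By definition $a^n-e=a^n-ax\in R^{qnil}$, which is (3).

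For $(3)\Rightarrow(2)$: from $a^n-e\in R^{qnil}$ with $e\in comm^2(a)$, and using that $R^{qnil}$ is closed under multiplication by commuting elements, we get $a^ne-e=(a^n-e)e\in(eRe)^{qnil}$ and $a^n(1-e)=(a^n-e)(1-e)\in R^{qnil}$. Because $ea=ae$ we have $(ae)^k=a^ke$ for all $k$, so $(ae)^n=a^ne=e+(a^ne-e)$ is a unit of $eRe$; factoring $1-t^n=(1-t)(1+t+\cdots+t^{n-1})$ and noting that $ae$ commutes with the second factor shows $ae$ itself is a unit of $eRe$. The same $1-t^n$ trick together with Lemma 2.1 turns $a^n(1-e)=(a(1-e))^n\in R^{qnil}$ into $a(1-e)\in R^{qnil}$. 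Now set $u=(ae)^{-1}_{eRe}\in eRe$; then $eu=ue=u$ and $ua=u(ae)+ue\cdot a(1-e)=e$ since $e(1-e)=0$, similarly $au=e$, so $uau=u$, $a-a^2u=a-ae=a(1-e)\in R^{qnil}$, and $u\in comm^2(a)$: any $z\in comm(a)$ commutes with $e$ and with $ae$, hence $eze$ commutes with $u$ in $eRe$, and a short computation promotes this to $zu=uz$. Thus $a^d=u$ exists with $aa^d=e$, so $a^n-aa^d=a^n-e\in R^{qnil}$, which is (2). Finally $(2)\Rightarrow(1)$ is immediate: $x=a^d$ satisfies $xax=a^daa^d=a^d=x$, $x\in comm^2(a)$, and $a^n-ax=a^n-aa^d\in R^{qnil}$.

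The main obstacle is $(3)\Rightarrow(2)$: extracting the generalized Drazin inverse from the bare idempotent $e$, i.e.\ seeing that $a^n-e\in R^{qnil}$ forces simultaneously that $ae$ is invertible in the corner $eRe$ and that $a(1-e)$ is quasinilpotent, and then verifying $a^d=u\in comm^2(a)$. The other two implications are bookkeeping with $xax=x$, the double-commutant condition, and Lemma 2.1.
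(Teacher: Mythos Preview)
The paper does not prove this lemma; it simply cites \cite[Theorem 2.1]{M} (Mosic). Your proposal, by contrast, supplies a direct argument. Note that the lemma as stated in the paper has only \emph{two} conditions --- your (1) and your (3) --- so the intermediate condition you call (2) and the implication $(3)\Rightarrow(2)$ go beyond what is required here, though they cause no harm and in fact reconstruct the content behind the citation.

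Your argument is essentially correct. Two places deserve a word more care. First, the step ``$R^{qnil}$ is closed under multiplication by commuting elements'', used to obtain $(a^n-e)e\in(eRe)^{qnil}$ and $(a^n-e)(1-e)\in R^{qnil}$, is not entirely trivial in a general ring; the cleanest justification here is via the Peirce decomposition relative to $e$: since $ea=ae$, the element $a^n-e$ is block-diagonal, and a block-diagonal quasinilpotent has quasinilpotent diagonal blocks in the corner rings $eRe$ and $(1-e)R(1-e)$, which in turn are quasinilpotent in $R$. Second, your appeal to ``Lemma 2.1'' for deducing $a(1-e)\in R^{qnil}$ from $(a(1-e))^n\in R^{qnil}$ is misplaced --- Lemma 2.1 is the $ab\leftrightarrow ba$ statement. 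The real argument is the factorisation $1-(wy)^n=(1-wy)\bigl(1+wy+\cdots+(wy)^{n-1}\bigr)$ with commuting factors, which forces $1-wy$ to be a unit whenever $1-(wy)^n$ is. With these clarifications your proof is a self-contained alternative to the paper's bare citation.
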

\begin{enumerate}
\item [(1)]{\it $a$ has generalized n-strongly Drazin inverse.}
\vspace{-.5mm}
\item [(2)]{\it There exists $e^2=e\in R$ such
that
$$e\in comm^2(a), a^n-e\in R^{qnil}.$$}
\end{enumerate}
\begin{proof} See ~\cite[Theorem 2.1]{M}.\end{proof}

Now we come to state the main result.

\begin{thm} Let $R$ be a ring, and let $a,b\in R$. Then $1-ab$ has generalized n-strongly Drazin inverse if and only if $1-ba$ has generalized n-strongly Drazin inverse.\end{thm}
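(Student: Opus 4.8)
The plan is to reduce the claimed equivalence, via Lemma 2.2, to two conditions that are each transferred between $1-ab$ and $1-ba$ by tools already available.

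First I would record the following strengthening of Lemma 2.2 (it is essentially~\cite[Theorem 2.1]{M}; I indicate a proof for completeness): \emph{for $y\in R$, $y$ has generalized $n$-strongly Drazin inverse if and only if $y$ has generalized Drazin inverse and $y^{n}-y^{2n}\in R^{qnil}$.} For the forward direction, Lemma 2.2 supplies $e=e^{2}\in comm^{2}(y)$ with $w:=y^{n}-e\in R^{qnil}$; since $e$ commutes with $w$ one gets $y^{n}-y^{2n}=w(1-2e-w)\in R^{qnil}$, while $q:=1-e$ satisfies $q=q^{2}\in comm^{2}(y)$, $yq\in R^{qnil}$ (because $(yq)^{n}=y^{n}q=(y^{n}-e)(1-e)\in R^{qnil}$ and $R^{qnil}$ is closed under $n$-th roots) and $y+q=[ye+(1-e)]+yq\in U(R)$ (a unit plus a commuting quasinilpotent), so $q=y^{\pi}$. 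Conversely, if $y$ is generalized Drazin invertible with $y^{n}-y^{2n}\in R^{qnil}$, put $q=y^{\pi}$ and pass to the corner $C=(1-q)R(1-q)$, in which $y$ restricts to a unit; then $(y^{n}-y^{2n})(1-q)\in R^{qnil}\cap C$ is quasinilpotent in $C$, and multiplying by the inverse of the unit $y^{n}(1-q)$ gives $(1-y^{n})(1-q)\in R^{qnil}$. As $y^{n}q=(yq)^{n}\in R^{qnil}$ as well, the element $z:=y^{n}-(1-q)=y^{n}q-(1-y^{n})(1-q)$ commutes with the idempotent $q$ and has both corner parts $qzq=y^{n}q$ and $(1-q)z(1-q)=-(1-y^{n})(1-q)$ quasinilpotent, hence $z\in R^{qnil}$; since $1-q\in comm^{2}(y)$, Lemma 2.2 finishes this direction.

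Granting this, the theorem would then follow from a chain of equivalences. By the characterization above, $1-ab$ has generalized $n$-strongly Drazin inverse iff $1-ab$ is generalized Drazin invertible and $(1-ab)^{n}-(1-ab)^{2n}\in R^{qnil}$, and likewise for $1-ba$. Generalized Drazin invertibility already transfers: $1-ab$ has generalized Drazin inverse iff $1-ba$ does, by~\cite[Theorem 2.3]{ZC}. For the second condition, set $g(t)=\sum_{k=1}^{n}\binom{n}{k}(-1)^{k}t^{k-1}$, so that $tg(t)=(1-t)^{n}-1$; using $(1-ab)^{k}a=a(1-ba)^{k}$ and $b(ab)^{k}=(ba)^{k}b$ one finds
$$(1-ab)^{n}-(1-ab)^{2n}=-(1-ab)^{n}\,ab\,g(ab)=-a\,(1-ba)^{n}g(ba)\,b ,$$
which by Lemma 2.1 lies in $R^{qnil}$ if and only if $(1-ba)^{n}g(ba)\,ba=(1-ba)^{2n}-(1-ba)^{n}$ does, i.e. if and only if $(1-ba)^{n}-(1-ba)^{2n}\in R^{qnil}$. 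Chaining the three equivalences yields the statement in both directions simultaneously, so there is no asymmetry to worry about.

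The real work sits in the auxiliary characterization — specifically its converse half, where one must manufacture the idempotent $e=1-y^{\pi}$ with $y^{n}-e\in R^{qnil}$ out of mere generalized Drazin invertibility together with $y^{n}-y^{2n}\in R^{qnil}$. This leans on two standard but not entirely automatic facts for rings: that quasinilpotency passes back and forth between a corner $pRp$ and $R$, and that a sum of quasinilpotents supported on the complementary corners $pRp$ and $(1-p)R(1-p)$ of an idempotent $p$ is again quasinilpotent (both are trivial for Banach algebras via spectral radius, but need a small argument in general). If~\cite[Theorem 2.1]{M} is cited with this equivalence already included, that step costs nothing and the whole proof collapses to the short computation with $g(t)$ above, Lemma 2.1, and~\cite[Theorem 2.3]{ZC}.
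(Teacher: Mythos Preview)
Your strategy differs markedly from the paper's: the paper writes $(1-ab)^{n}=1-ac$ and $(1-ba)^{n}=1-ca$ for one and the same element $c$, and from the Lemma~2.2 idempotent $p$ attached to $1-ab$ it \emph{builds by hand} the idempotent $1-q$ for $1-ba$, namely $q=c(1-p)\bigl(1-(1-p)(1-ab)^{n}\bigr)^{-1}a$, checking $q^{2}=q$, $q\in comm^{2}(1-ca)$ and $(1-ba)^{n}-(1-q)\in R^{qnil}$ directly; the only facts about $R^{qnil}$ invoked are Lemma~2.1 and the classical Jacobson Lemma for units. Your plan --- recast ``generalized $n$-strongly Drazin'' as ``generalized Drazin together with $y^{n}-y^{2n}\in R^{qnil}$'' and then transfer the two pieces separately via \cite[Theorem~2.3]{ZC} and the $g(t)$ computation with Lemma~2.1 --- is conceptually cleaner and would yield a very short proof if the auxiliary equivalence were available.

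The gap is precisely that equivalence. First, \cite[Theorem~2.1]{M} is exactly Lemma~2.2 here (the idempotent criterion), not the $y^{n}-y^{2n}$ criterion you state, so your closing hedge does not apply. Second, your own argument for the equivalence repeatedly uses that a quasinilpotent multiplied by a commuting factor is again quasinilpotent: you pass from $w=y^{n}-e\in R^{qnil}$ to $w(1-2e-w)\in R^{qnil}$ and to $w(1-e)\in R^{qnil}$ in the forward direction, and in the converse you pass from $y^{n}-y^{2n}\in R^{qnil}$ to $(y^{n}-y^{2n})(1-q)\in R^{qnil}$ and from $yq\in R^{qnil}$ to $(yq)^{n}\in R^{qnil}$. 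In a Banach algebra this is immediate from the spectral-radius description of $R^{qnil}$, but in an arbitrary ring it is not: an element commuting with $ws$ need not commute with $w$, so the implication ``$w\in R^{qnil}$, $ws=sw\ \Rightarrow\ ws\in R^{qnil}$'' is unavailable, and it is not among the two corner-ring facts you flag as needing ``a small argument''. (The step $z^{n}\in R^{qnil}\Rightarrow z\in R^{qnil}$ is fine, via factoring $1-t^{n}$; it is the opposite direction and the commuting-product step that do not go through.) The paper's explicit construction is exactly what sidesteps these closure issues; as written, your argument proves the theorem only under Banach-algebra-type hypotheses on $R^{qnil}$.
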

\begin{proof} $\Longrightarrow$ Let $\alpha=(1-ab)^n$ and . Then $$\begin{array}{lll}
\alpha&=&\sum\limits_{i=0}^{n}(-1)^{i}\left(
\begin{array}{c}
n\\
i
\end{array}
\right)(ab)^{i}\\
&=&1-a\sum\limits_{i=1}^{n}(-1)^{i-1}\left(
\begin{array}{c}
n\\
i
\end{array}
\right)b(ab)^{i-1}\\
&=&1-ac,
\end{array}$$ where $c=\sum\limits_{i=1}^{n}(-1)^{i-1}\left(
\begin{array}{c}
n\\
i
\end{array}
\right)b(ab)^{i-1}$.

Let $\beta=(1-ba)^n$. Then
$$\begin{array}{lll}
\beta&=&\sum\limits_{i=0}^{n}(-1)^{i}\left(
\begin{array}{c}
n\\
i
\end{array}
\right)(ba)^{i}\\
&=&1-\big(\sum\limits_{i=1}^{n}(-1)^{i-1}\left(
\begin{array}{c}
n\\
i
\end{array}
\right)(ba)^{i-1}b\big)a\\
&=&1-ca.
\end{array}$$
In view of Lemma 2.2, there exists $p^2=p\in R$ such
that
$$p\in comm^2(1-ac), \alpha-p\in R^{qnil}.$$
Then $1-(1-p)\alpha=1-(1-p)(\alpha -p)\in U(R)$. Set $q=c(1-p)(1-(1-p)\alpha)^{-1}a$. Then
$$\begin{array}{lll}
q^2&=&c(1-p)(1-(1-p)\alpha)^{-1}(ac)(1-p)(1-(1-p)\alpha)^{-1}a\\
&=&c(1-(1-p)\alpha)^{-1}((1-p)ac)(1-(1-p)\alpha)^{-1}(1-p)a\\
&=&c(1-(1-p)\alpha)^{-1}(1-p)(1-\alpha)(1-(1-p)\alpha)^{-1}(1-p)a\\
&=&c(1-(1-p)\alpha)^{-1}(1-p)(1-(1-p)\alpha)(\alpha+1-p)^{-1}a\\
&=&c(1-p)(1-(1-p)\alpha)^{-1}a\\
&=&q.
\end{array}$$
Clearly, $(1-ca)c=c(1-ac)$, and so we have
$$\begin{array}{lll}
(1-ca)q&=&(1-ca)c(1-p)(1-(1-p)\alpha)^{-1}a\\
&=&c(1-p)(1-(1-p)\alpha)^{-1}a(1-ca)\\
&=&q(1-ca),
\end{array}$$
i.e., $caq=qca$. We claim that $q\in comm^2(1-ca)$. Let $y\in R$ be such that $y(1-ca)=(1-ca)y$. Then $yca=cay$. This implies that
$(ayc)ac=ac(ayc)$. We infer that $(ayc)(1-ac)=(1-ac)(ayc)$, and so $(ayc)\alpha=\alpha (ayc)$. As $p\in comm^2(1-ac)$, we get $(ayc)p=p(ayc)$.

Thus, we have $$\begin{array}{ll}
&(ayc)(1-p)(1-(1-p)\alpha)^{-1}\\
=&(1-p)(ayc)(1-(1-p)\alpha)^{-1}\\
=&(1-p)(1-(1-p)\alpha)^{-1}(ayc).
\end{array}$$
Hence, $$\begin{array}{lll}
(cay)q&=&cayc(1-p)(1-(1-p)\alpha)^{-1}a\\
&=&c(1-p)(1-(1-p)\alpha)^{-1}ayca\\
&=&c(1-p)(1-(1-p)\alpha)^{-1}a(cay)\\
&=&q(cay)\\
&=&q(yca),
\end{array}$$ and so $(ca)yq=qy(ca).$
Then we check that $$\begin{array}{lll}
yq(1-(1-ca)q)&=&yq(1-q(1-ca))\\
&=&yqca\\
&=&ycaq\\
&=&cayq\\
&=&qyca.
\end{array}$$ Multiplying the above by $q$ on the right side yields
$$yq(1-(1-ca)q)=qyq(1-(1-ca)q).$$ Obviously, $1-(1-ca)q=1-(1-ca)c(1-p)(1-(1-p)\alpha)^{-1}a$.
Since $$\begin{array}{ll}
&1-a(1-ca)c(1-p)(1-(1-p)\alpha)^{-1}\\
=&1-ac(1-ac)(1-p)(1-(1-p)\alpha)^{-1}\\
=&1-(\alpha-\alpha^2)(1-p)(1-(1-p)\alpha)^{-1}\\
=&1-(1-\alpha)\alpha (1-p)(1-(1-p)\alpha)^{-1}\\
\in& U(R),
\end{array}$$ by using Jacobson's Lemma (~\cite[Corollary 2.5]{Y}), we see that $1-(1-ca)q\in U(R)$. This implies that $yq=qyq$.
Hence $$\begin{array}{lll}
(1-(1-ca)q)qy&=&caqy\\
&=&qyca\\
&=&cayq\\
&=&caqyq\\
&=&(1-(1-ca)q)qyq,
\end{array}$$
and so $qy=qyq$. Therefore $yq=qyq=qy$, and then $q\in comm^2(1-ca)$.

Write $r=\big((1-p)(1-(1-p)\alpha)^{-1}-1\big)a$. Then $$\begin{array}{lll}
rc&=&\big((1-p)(1-(1-p)\alpha)^{-1}-1\big)ac\\
&=&\big((1-p)(1-(1-p)\alpha)^{-1}-1\big)(1-\alpha)\\
&=&(1-(1-p)\alpha)^{-1}(1-p)(1-\alpha)-(1-\alpha)\\
&=&1-p-1+\alpha\\
&=&\alpha-p,
\end{array}$$ and so
$rc\in R^{qnil}$. In light of Lemma 2.1, $cr\in R^{qnil}$.

On the other hand, $$\begin{array}{lll}
\beta+q&=&1-ca+c(1-p)(1-(1-p)\alpha)^{-1}a\\
&=&1-c\big(1-(1-p)(1-(1-p)\alpha)^{-1}\big)a\\
&=&1+cr,
\end{array}$$ and then $\beta-(1-q)=cr\in R^{qnil}.$
Obviously, $1-q=(1-q)^2\in comm^2(1-ca)$.
Accordingly, $1-ba$ has generalized n-strongly Drazin inverse.

$\Longleftarrow$ This is symmetric.\end{proof}

\begin{cor} Let $R$ be a ring, let $m\in {\Bbb N}$, and let $a,b\in R$. Then $(1-ab)^m$ generalized n-strongly Drazin inverse if and only if $(1-ba)^m$ has generalized n-strongly Drazin inverse.
\end{cor}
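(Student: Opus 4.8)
The plan is to reduce the corollary directly to Theorem 2.3 by putting the powers $(1-ab)^m$ and $(1-ba)^m$ into Jacobson form $1-a'c'$ and $1-c'a'$ for a common element, exactly as in the opening computation of the proof of Theorem 2.3 (with the fixed exponent $n$ there replaced by $m$).

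First I would set
$$c=\sum\limits_{i=1}^{m}(-1)^{i-1}\left(\begin{array}{c} m\\ i\end{array}\right)b(ab)^{i-1}$$
and expand by the binomial theorem:
$$(1-ab)^m=\sum\limits_{i=0}^{m}(-1)^{i}\left(\begin{array}{c} m\\ i\end{array}\right)(ab)^{i}=1-a\Big(\sum\limits_{i=1}^{m}(-1)^{i-1}\left(\begin{array}{c} m\\ i\end{array}\right)b(ab)^{i-1}\Big)=1-ac.$$
Regrouping the same sum from the other side, and using $b(ab)^{i-1}=(ba)^{i-1}b$ for every $i\geq 1$, gives
$$(1-ba)^m=\sum\limits_{i=0}^{m}(-1)^{i}\left(\begin{array}{c} m\\ i\end{array}\right)(ba)^{i}=1-\Big(\sum\limits_{i=1}^{m}(-1)^{i-1}\left(\begin{array}{c} m\\ i\end{array}\right)(ba)^{i-1}b\Big)a=1-ca.$$

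Then I would apply Theorem 2.3 to the pair of elements $a$ and $c$: $1-ac$ has generalized n-strongly Drazin inverse if and only if $1-ca$ has generalized n-strongly Drazin inverse. Combining this equivalence with the two identities above yields that $(1-ab)^m$ has generalized n-strongly Drazin inverse if and only if $(1-ba)^m$ has generalized n-strongly Drazin inverse, which is the assertion.

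I do not expect any genuine obstacle here: the only care needed is the index bookkeeping in the identity $b(ab)^{i-1}=(ba)^{i-1}b$ and the shift of summation index, but this is precisely the manipulation already performed at the start of the proof of Theorem 2.3, so nothing new is required. The entire content of the corollary is contained in Theorem 2.3 once the $m$-th powers are written in Jacobson-type form.
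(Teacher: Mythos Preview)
Your argument is correct and follows essentially the same strategy as the paper: write $(1-ab)^m=1-ac$ and $(1-ba)^m=1-ca$ for a common element $c$, then invoke Theorem~2.3. The only cosmetic difference is that the paper obtains $c$ via the geometric-sum factorization $1-(1-ab)^m=ab\big(1+(1-ab)+\cdots+(1-ab)^{m-1}\big)$ together with $b(1-ab)^k=(1-ba)^kb$, whereas you use the binomial expansion directly (exactly as in the opening of the proof of Theorem~2.3); both routes produce the same element $c$ and the same reduction.
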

\begin{proof} $\Longrightarrow$ One easily checks that $b(1-(ab)^k)=(1-(ba)^k)b$ for all $k\in {\Bbb N}$. Then
$$\begin{array}{lll}
1-(1-ab)^m&=&ab(1+(1-ab)+\cdots +(1-ab)^{m-1})\\
&=&a(b+b(1-ab)+\cdots +b(1-ab)^{m-1})\\
&=&a(b+(1-ba)b+\cdots +(1-ba)^{m-1}b)\\
&=&a(1+(1-ba)+\cdots +(1-ba)^{m-1})b.
\end{array}$$
Hence, $$(1-ab)^m=1-a(1+(1-ba)+\cdots +(1-ba)^{m-1})b.$$
Likewise,  $$\begin{array}{lll}
(1-ba)^m&=&1-b(1+(1-ab)+\cdots +(1-ab)^{m-1})a\\
&=&1-(1+(1-ba)+\cdots +(1-ba)^{m-1})ba.
\end{array}$$
In view of Theorem 2.3, $(1-ba)^m$ has generalized n-strongly Drazin inverse, as desired.

$\Longleftarrow$ This is symmetric.\end{proof}

\begin{cor} Let $R$ be a ring, and let $A\in M_{k\times l}(R), B\in M_{l\times k}(R)$. Then $I_k+AB\in M_k(R)$ has generalized n-strongly Drazin inverse if and only if $I_l+BA\in M_l(R)$ has generalized n-strongly Drazin inverse.
\end{cor}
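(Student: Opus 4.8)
The plan is to derive this from Theorem 2.3 by passing to the ring $S=M_{k+l}(R)$. Inside $S$ put
$$a=\begin{pmatrix} 0 & A\\ 0 & 0\end{pmatrix},\qquad b=\begin{pmatrix} 0 & 0\\ -B & 0\end{pmatrix},$$
so that $ab=\begin{pmatrix}-AB & 0\\ 0 & 0\end{pmatrix}$ and $ba=\begin{pmatrix}0 & 0\\ 0 & -BA\end{pmatrix}$; hence $1-ab=\mathrm{diag}(I_k+AB,\,I_l)$ and $1-ba=\mathrm{diag}(I_k,\,I_l+BA)$ in $S$. By Theorem 2.3, $1-ab$ has generalized $n$-strongly Drazin inverse in $S$ if and only if $1-ba$ does. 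So the corollary reduces to the following claim and its mirror image: for $u\in M_k(R)$, the element $\mathrm{diag}(u,I_l)\in S$ has generalized $n$-strongly Drazin inverse if and only if $u$ has generalized $n$-strongly Drazin inverse in $M_k(R)$.

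To prove this reduction I would use Lemma 2.2. First record two routine block facts: (i) if $w\in\big(M_k(R)\big)^{qnil}$ then $\mathrm{diag}(w,0)\in S^{qnil}$, since any $z\in comm(\mathrm{diag}(w,0))$ has its $(1,2)$-block annihilated by $w$, making $1+\mathrm{diag}(w,0)z$ block upper triangular with invertible diagonal blocks $I_k+wz_{11}$ and $I_l$; (ii) a block diagonal matrix $\mathrm{diag}(P,I_l)$ is a unit of $S$ exactly when $P$ is a unit of $M_k(R)$. Now if $e^2=e\in comm^2(u)$ with $u^n-e\in\big(M_k(R)\big)^{qnil}$, set $E=\mathrm{diag}(e,I_l)$: then $E^2=E$ and $\mathrm{diag}(u,I_l)^n-E=\mathrm{diag}(u^n-e,0)\in S^{qnil}$ by (i). Conversely, if $E^2=E\in comm^2(\mathrm{diag}(u,I_l))$ with $\mathrm{diag}(u,I_l)^n-E\in S^{qnil}$, then $\mathrm{diag}(I_k,0)\in comm(\mathrm{diag}(u,I_l))$ forces $E=\mathrm{diag}(E_1,E_2)$ to be block diagonal with $E_1,E_2$ idempotent; testing the quasinilpotent $\mathrm{diag}(u^n-E_1,I_l-E_2)$ against the commuting element $\mathrm{diag}(0,-I_l)$ shows $\mathrm{diag}(I_k,E_2)\in U(S)$, so by (ii) $E_2$ is a unit, whence $E_2=I_l$; testing $E$ against $\mathrm{diag}(y,0)$ for $y\in comm(u)$ gives $E_1\in comm^2(u)$; and testing the quasinilpotent $\mathrm{diag}(u^n-E_1,0)$ against $\mathrm{diag}(x',0)$ for $x'\in comm(u^n-E_1)$ gives, via (ii), that $u^n-E_1\in\big(M_k(R)\big)^{qnil}$. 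Lemma 2.2 then delivers the generalized $n$-strongly Drazin inverse of $u$.

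The one place that needs genuine care is the double-commutant requirement $E=\mathrm{diag}(e,I_l)\in comm^2(\mathrm{diag}(u,I_l))$ in the ``if'' direction, since an arbitrary $z\in comm(\mathrm{diag}(u,I_l))$ has rectangular off-diagonal blocks constrained only by $(u-I_k)z_{12}=0$ and $z_{21}(u-I_k)=0$. The trick is that $w:=u^n-e$ is quasinilpotent, so $I_k-w\in U(M_k(R))$ (apply the definition of $R^{qnil}$ with the commuting element $-I_k$); from $uz_{12}=z_{12}$ one gets $wz_{12}=(I_k-e)z_{12}$, and since $e$ commutes with $u$ hence with $w$ this yields $(I_k-w)\big((I_k-e)z_{12}\big)=0$, so $(I_k-e)z_{12}=0$, i.e. $ez_{12}=z_{12}$; symmetrically $z_{21}e=z_{21}$. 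With these two identities $Ez=zE$ follows at once. The mirror claim for $\mathrm{diag}(I_k,I_l+BA)$ and $I_l+BA$ is obtained by swapping the two diagonal blocks, and combining both reductions with Theorem 2.3 completes the proof. I expect this off-diagonal double-commutant bookkeeping to be the only real obstacle; the remaining verifications are direct block computations.
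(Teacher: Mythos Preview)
Your approach---embedding $A,B$ into $M_{k+l}(R)$ as off-diagonal blocks and applying Theorem~2.3 to the resulting $1-ab$ and $1-ba$---is exactly the paper's approach (the paper places the blocks as $C=\left(\begin{smallmatrix}0&0\\A&0\end{smallmatrix}\right)$, $D=\left(\begin{smallmatrix}0&B\\0&0\end{smallmatrix}\right)$, but the idea is identical). The paper then simply asserts the conclusion without justifying why $\mathrm{diag}(u,I_l)$ has generalized $n$-strongly Drazin inverse in $M_{k+l}(R)$ iff $u$ does in $M_k(R)$; your careful verification of this reduction via Lemma~2.2---in particular the argument that $(I_k-e)z_{12}=0$ using the invertibility of $I_k-(u^n-e)$---is correct and supplies detail the paper omits.
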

\begin{proof} Let $m=k+l$. Set $$C=
\left(
\begin{array}{cc}
0&0\\
A&0
\end{array}
\right), D=
\left(
\begin{array}{cc}
0&B\\
0&0
\end{array}
\right)\in M_{m\times m}(R).$$
Then we observe that
$$I_m+CD=\left(
\begin{array}{cc}
I_l&0\\
0&I_k+AB
\end{array}
\right), I_m+DC=\left(
\begin{array}{cc}
I_l+BA&0\\
0&I_k
\end{array}
\right).$$
In light of Theorem 2.3, $I_k+CD\in M_{k\times k}(R)$ has generalized n-strongly Drazin inverse if and only if so has $I_l+DC\in M_{l\times l}(R)$. This completes the proof.\end{proof}

\section{Extensions in Banach algebra}

Jacobson's Lemma is of interest in spectral theory in Banach algebras. Over the years, certain extensions of Jacobson's Lemma were founded for many operator properties (see~\cite{M, MZ, Z} and ~\cite{ZZ}). The goal of this section is to generalize Theorem 2.3 to wider cases in a Banach algebra. Recall that $a\in \mathcal{A}$ has generalized strongly Drazin inverse if there exists $x\in R$ such that $xax=x, x\in comm(a), a-ax\in \mathcal{A}^{qnil}.$ The following lemma is crucial.

\begin{lem} Let $\mathcal{A}$ be a Banach algebra, and let $a\in \mathcal{A}$. Then the following are equivalent:
\end{lem}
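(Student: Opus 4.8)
The plan is to run a short cycle of implications through the decomposition of $a$ as a commuting sum of an idempotent and a quasinilpotent, using $a-a^2\in\mathcal{A}^{qnil}$ as the convenient hub. Concretely I would show: $a$ has generalized strongly Drazin inverse $\Rightarrow$ there is $e^2=e\in comm(a)$ with $a-e\in\mathcal{A}^{qnil}$ $\Rightarrow$ $a-a^2\in\mathcal{A}^{qnil}$ $\Rightarrow$ back to the spectral-idempotent form $\Rightarrow$ back to generalized strong Drazin invertibility. The ``commuting idempotent plus quasinilpotent'' phrasing is then just a restatement of the middle condition, since $e\in comm(a)$ is equivalent to $e$ commuting with $q:=a-e$.

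For the easy directions: if $x$ is a generalized strongly Drazin inverse of $a$, then $e:=ax=xa$ is an idempotent by $xax=x$, lies in $comm(a)$ because $x$ does, and $a-e=a-ax\in\mathcal{A}^{qnil}$. Conversely, from $a=e+q$ with $e^2=e$, $q\in\mathcal{A}^{qnil}$ and $eq=qe$, a one-line expansion gives $a-a^2=q(1-2e-q)$, and the product is quasinilpotent since a quasinilpotent element times a commuting element is quasinilpotent (e.g.\ by submultiplicativity of the spectral radius on commuting elements). To pass back, I would use holomorphic functional calculus: $a-a^2\in\mathcal{A}^{qnil}$ forces $\sigma(a)\subseteq\{0,1\}$, so $\{1\}$ is relatively open and closed in $\sigma(a)$ and the associated Riesz idempotent $e$ is given by a contour integral of the resolvent; being approximable by polynomials in $a$, it lies in $comm(a)$ (indeed in $comm^2(a)$, should that variant be what the statement requires), and since $a$ on the range of $e$ has spectrum $\{1\}$ while $a$ on the range of $1-e$ has spectrum $\{0\}$, we get $\sigma(a-e)=\{0\}$, i.e.\ $a-e\in\mathcal{A}^{qnil}$.

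Finally, to manufacture the inverse from $e^2=e\in comm(a)$ with $a-e\in\mathcal{A}^{qnil}$, I would work in the corner $e\mathcal{A}e$: there $ea=e+e(a-e)$ with $e(a-e)$ quasinilpotent in $e\mathcal{A}e$, hence $ea$ is invertible in $e\mathcal{A}e$; letting $x\in e\mathcal{A}e$ be its inverse, one checks $ax=xa=e$ (the $(1-e)$-corner of $a$ is annihilated by $x$), so $x\in comm(a)$, $xax=xe=x$, and $a-ax=a-e\in\mathcal{A}^{qnil}$. The main obstacle is the single analytic step $a-a^2\in\mathcal{A}^{qnil}\Rightarrow$ existence of the commuting idempotent: this is where the Banach-algebra hypothesis is genuinely used and must be handled by the spectral projection rather than any ring-theoretic device, the two points needing care being that the Riesz idempotent really commutes with everything that commutes with $a$, and that quasinilpotence of $e(a-e)$ in $\mathcal{A}$ persists in the unital subalgebra $e\mathcal{A}e$ so that $ea$ is invertible there.
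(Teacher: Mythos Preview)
You have guessed the wrong pair of equivalent conditions. In the paper the \texttt{enumerate} sits outside the \texttt{lem} environment, so the statement you were shown stops at ``the following are equivalent''; the actual items are
\begin{enumerate}
\item[(1)] $a$ has generalized $n$-strongly Drazin inverse;
\item[(2)] $a^n$ has generalized strongly Drazin inverse.
\end{enumerate}
What you propose instead is the $n=1$ characterization ``$a$ has generalized strongly Drazin inverse $\Leftrightarrow$ there is $e^2=e\in comm(a)$ with $a-e\in\mathcal{A}^{qnil}$ $\Leftrightarrow$ $a-a^2\in\mathcal{A}^{qnil}$''. Your argument for \emph{that} statement is sound (and the Riesz-idempotent step is handled correctly), but it is not the lemma at hand.

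The paper's route is entirely different and never touches $a-a^2$ or holomorphic functional calculus directly. It passes through the ordinary generalized Drazin inverse: from (1) one cites \cite{M} to obtain $a\in\mathcal{A}^{d}$ with $a^n-aa^d\in\mathcal{A}^{qnil}$, then invokes a Cline-type formula \cite[Theorem~2.7]{J} to get $(a^n)^d=(a^d)^n$, so that $a^n-a^n(a^n)^d=a^n-(aa^d)^n=a^n-aa^d\in\mathcal{A}^{qnil}$; the converse runs the same citations in reverse, using that $a^n$ generalized Drazin invertible forces $a$ generalized Drazin invertible. Your characterization could be pushed toward the real lemma---apply it to $a^n$ to get an idempotent $e$ with $a^n-e\in\mathcal{A}^{qnil}$---but you would still owe the passage from $e\in comm(a^n)$ to an idempotent in $comm^2(a)$ (as the $n$-strong definition demands), and that step effectively requires building the Riesz projection from the resolvent of $a$ itself rather than of $a^n$; as written, your proposal does not address (1)$\Leftrightarrow$(2) at all.
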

\begin{enumerate}
\item [(1)]{\it $a$ has generalized n-strongly Drazin inverse.}
\vspace{-.5mm}
\item [(2)]{\it $a^n$ has generalized strongly Drazin inverse.}
\end{enumerate}
\begin{proof} $(1)\Rightarrow (2)$ In view of~\cite[Lemma 2.1]{M}, $a\in \mathcal{A}$ has generalized Drazin inverse. Furthermore, we have $a^n-aa^d\in \mathcal{A}^{qnil}$.
Hence, $a^n-a^n(a^d)^n=a^n-(aa^d)^n\in \mathcal{A}^{qnil}$. In light of~\cite[Theorem 2.7]{J}, $a^n\in \mathcal{A}$ has generalized Drazin inverse and $(a^n)^d=(a^d)^n$.
Therefore $a^n-a^n(a^n)^d\in \mathcal{A}$, and so $a^n\in \mathcal{A}$ has generalized strongly Drazin inverse.

$(2)\Rightarrow (1)$ Obviously, $a^n\in \mathcal{A}$ has generalized Drazin inverse. It follows by ~\cite[Theorem 2.7]{J} that $a\in \mathcal{A}$ has generalized Drazin inverse and $(a^d)^n=(a^n)^d$. Since $a^n-aa^d=a^n-a^n(a^d)^n=a^n-a^n(a^n)^d\in \mathcal{A}^{qnil}$, we see that $a\in \mathcal{A}$ has generalized n-strongly Drazin inverse.\end{proof}

We are ready to prove:

\begin{thm} Let $a,b,c,d\in \mathcal{A}$ satisfy $acd=dbd$ and $dba=aca$. Then $1-ac$ has generalized n-strongly Drazin inverse if and only if $1-bd$ has generalized n-strongly Drazin inverse.\end{thm}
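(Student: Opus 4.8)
The plan is to reduce Theorem~3.2 to Theorem~2.3 (ordinary Jacobson's Lemma for the generalized $n$-strongly Drazin inverse) by manufacturing, from the hypotheses $acd=dbd$ and $dba=aca$, a pair of elements $u,v$ with $1-uv$ essentially equal to $1-ac$ and $1-vu$ essentially equal to $1-bd$ up to a unit conjugation, or more realistically by the ``absorbing'' trick used in the literature on the extended Jacobson's Lemma: the relations $acd=dbd$ and $dba=aca$ are exactly the conditions that let one pass between $1-ac$ and $1-bd$ via the intertwining element $d$. First I would record the two elementary identities that follow by induction from the hypotheses, namely $(ac)^k d = d(bd)^{k-1}(ba)$ and $d(ba)(ca)^{k-1} = (ac)^{k-1}(ac)d$ type relations; more to the point, I would establish $d(1-bd)^m \cdot(\text{something}) = (\text{something})\cdot(1-ac)^m d$ so that $d$ intertwines powers of $1-ac$ with powers of $1-bd$. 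Concretely, the cleanest route is: from $dba=aca$ one gets $d(ba)=(ac)a$ hence $d(ba)^k = (ac)^k \cdot(\text{tail})$, and from $acd=dbd$ one gets $(ac)d=d(bd)$ hence $(ac)^k d = d(bd)^k$; combining, $d$ carries the subalgebra generated by $bd$ to the one generated by $ac$.

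The key steps, in order, are: (i) prove $(ac)^k d = d (bd)^k$ for all $k\ge 1$ from $acd=dbd$ by induction (the inductive step uses $(ac)^{k+1}d = (ac)^k(acd) = (ac)^k(dbd) = ((ac)^k d)(bd) = d(bd)^k(bd)$); (ii) similarly prove $d(ba)^k = (ac)^k d'$ or directly handle the mixed products needed, using $dba=aca$; (iii) use these intertwinings to show that if $1-bd$ has generalized $n$-strongly Drazin inverse, with associated idempotent $e\in comm^2(1-bd)$ and $(1-bd)^n - e\in\mathcal{A}^{qnil}$ (Lemma~2.2), then the element $f$ built from $d$, $e$ and an appropriate unit (of the form already appearing in the proof of Theorem~2.3) is an idempotent lying in $comm^2(1-ac)$ with $(1-ac)^n - f\in\mathcal{A}^{qnil}$; (iv) conclude by Lemma~2.2 again. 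Alternatively, and probably more in the spirit of the paper, one packages $a,b,c,d$ into a single auxiliary ring element so that Theorem~2.3 applies verbatim: set, in the algebra of $2\times2$ matrices over $\mathcal{A}$ (or a suitable corner), matrices $X,Y$ with $XY = \mathrm{diag}(1-ac,\ *)$ and $YX=\mathrm{diag}(*,\ 1-bd)$ up to block-triangular junk that is handled by the quasinilpotent part, then invoke Theorem~2.3 plus Lemma~2.2 together with Corollary~2.5 of~\cite{Y} to strip off the harmless corner.

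The main obstacle I anticipate is \emph{step (iii)}: verifying that the transported idempotent genuinely lies in the \emph{double} commutant $comm^2(1-ac)$, not merely the commutant. This is precisely the subtle point that occupied the bulk of the proof of Theorem~2.3 (the long chain of manipulations showing $q\in comm^2(1-ca)$), and here it is compounded by the fact that $d$ need not be a unit, so one cannot simply conjugate. I would expect to need the hypotheses $acd=dbd$ and $dba=aca$ in a second, more delicate way at this stage: for an arbitrary $y$ commuting with $1-ac$, one must produce from $y$ a companion element commuting with $1-bd$ so as to push the double-commutant condition across $d$, and making that correspondence well-defined is where the two given relations must be used symmetrically. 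Once the double-commutant membership is in hand, the quasinilpotent part transfers routinely via Lemma~2.1, and the converse is symmetric under $(a,c)\leftrightarrow(b,d)$ after checking that the hypotheses are themselves symmetric under this swap (they are: $dba=aca$ and $acd=dbd$ become $acb=... $ — one checks the roles of $a\leftrightarrow b$, $c\leftrightarrow d$ do exchange the two equations).
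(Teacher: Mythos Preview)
Your plan is substantially harder than the paper's proof, and the hardest step you yourself flag (pushing the \emph{double} commutant across $d$) is left as a hope rather than an argument. The paper avoids this entirely by exploiting Lemma~3.1, which is the point you are missing: in a Banach algebra, $x$ has generalized $n$-strongly Drazin inverse if and only if $x^n$ has generalized \emph{strongly} (i.e.\ $1$-strongly) Drazin inverse. So instead of transporting an idempotent by hand, the paper simply reduces the exponent: write
\[
(1-ac)^n = 1 - ac',\qquad (1-bd)^n = 1 - b'd,
\]
with $c'=\sum_{i=1}^n(-1)^{i-1}\binom{n}{i}c(ac)^{i-1}$ and $b'=\sum_{i=1}^n(-1)^{i-1}\binom{n}{i}(bd)^{i-1}b$, and then checks by a short induction (essentially your observation $(ac)^kd=d(bd)^k$ and its companion from $dba=aca$) that the \emph{same} type of relations hold for the new data: $ac'd=db'd$ and $db'a=ac'a$. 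At this point the problem is the extended Jacobson's Lemma for the generalized \emph{strongly} Drazin inverse under these four-element relations, which is already in the literature (\cite[Theorem~2.4]{ZC}); one quotes it and then applies Lemma~3.1 once more in the other direction. No double-commutant chase, no $2\times 2$ matrix packaging, no reduction to Theorem~2.3.

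Concretely, then, your proposal has two issues. First, it overlooks the intended leverage of Lemma~3.1 (which is why the theorem is placed in the Banach algebra section): the whole content of ``generalized $n$-strongly'' is absorbed by passing to $n$th powers, so the $n$-dependence disappears from the argument. Second, even on its own terms your sketch does not close the gap you identify: for an arbitrary $y\in comm(1-ac)$ there is no reason the ``companion element commuting with $1-bd$'' you allude to exists when $d$ is not invertible, and the symmetry swap you propose at the end is also not quite the right one (the hypotheses are preserved under $a\leftrightarrow d$, $b\leftrightarrow c$, not $(a,c)\leftrightarrow(b,d)$, and one still needs an extra application of the two-element Jacobson's Lemma to pass between $1-db$ and $1-bd$).
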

\begin{proof} $\Longrightarrow$ Let $\alpha=(1-ac)^n$. Then $$\begin{array}{lll}
\alpha&=&\sum\limits_{i=0}^{n}(-1)^{i}\left(
\begin{array}{c}
n\\
i
\end{array}
\right)(ac)^{i}\\
&=&1-a\sum\limits_{i=1}^{n}(-1)^{i-1}\left(
\begin{array}{c}
n\\
i
\end{array}
\right)c(ac)^{i-1}\\
&=&1-ac',
\end{array}$$ where $c'=\sum\limits_{i=1}^{n}(-1)^{i-1}\left(
\begin{array}{c}
n\\
i
\end{array}
\right)c(ac)^{i-1}$.

Let $\beta=(1-bd)^n$. Then
$$\begin{array}{lll}
\beta&=&\sum\limits_{i=0}^{n}(-1)^{i}(bd)^{i}\\
&=&1-\big(\sum\limits_{i=1}^{n}(-1)^{i-1}\left(
\begin{array}{c}
n\\
i
\end{array}
\right)(bd)^{i-1}b\big)d\\
&=&1-b'd,
\end{array}$$ where $b'=\sum\limits_{i=1}^{n}(-1)^{i-1}\left(
\begin{array}{c}
n\\
i
\end{array}
\right)(bd)^{i-1}b.$
We are easy to verify that $$\begin{array}{lll}
ac'd&=&a\sum\limits_{i=1}^{n}(-1)^{i-1}\left(
\begin{array}{c}
n\\
i
\end{array}
\right)c(ac)^{i-1}d\\
&=&\sum\limits_{i=1}^{n}(-1)^{i-1}\left(
\begin{array}{c}
n\\
i
\end{array}
\right)(ac)^{i}d\\
&=&\sum\limits_{i=1}^{n}(-1)^{i-1}(db)^{i}d\\
&=&d\sum\limits_{i=1}^{n}(-1)^{i-1}\left(
\begin{array}{c}
n\\
i
\end{array}
\right)(bd)^{i-1}bd\\
&=&db'd;
\end{array}$$
$$\begin{array}{lll}
db'a&=&d\sum\limits_{i=1}^{n}(-1)^{i-1}\left(
\begin{array}{c}
n\\
i
\end{array}
\right)(bd)^{i-1}ba\\
&=&\sum\limits_{i=1}^{n}(-1)^{i-1}\left(
\begin{array}{c}
n\\
i
\end{array}
\right)d(bd)^{i-1}ba\\
&=&\sum\limits_{i=1}^{n}(-1)^{i-1}\left(
\begin{array}{c}
n\\
i
\end{array}
\right)(db)^{i}a\\
&=&\sum\limits_{i=1}^{n}(-1)^{i-1}(ac)^{i}a\\
&=&a\sum\limits_{i=1}^{n}(-1)^{i-1}\left(
\begin{array}{c}
n\\
i
\end{array}
\right)c(ac)^{i-1}a\\
&=&ac'a.
\end{array}$$
Then we have $ac'd=db'd, db'a=ac'a.$
By virtue of Lemma 3.1,
$\alpha$ has generalized strongly Drazin inverse. In light of ~\cite[Theorem 2.4]{ZC}, $\beta\in R$ has strongly generalized Drazin inverse.
By using Lemma 3.1 again, $1-bd$ has generalized n-strongly Drazin inverse, as asserted.

$\Longleftarrow$ This is symmetric.\end{proof}

\begin{cor} Let $a,b,c\in \mathcal{A}$ satisfy $aba=aca$. Then $1-ac$ has generalized n-strongly Drazin inverse if and only if $1-ba$ has generalized n-strongly Drazin inverse.
\end{cor}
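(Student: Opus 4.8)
The plan is to obtain this corollary as an immediate specialization of Theorem 3.2, choosing the four parameters $a,b,c,d$ appropriately. In Theorem 3.2 the hypotheses are $acd=dbd$ and $dba=aca$, while the conclusion compares $1-ac$ with $1-bd$. Since the corollary wants to compare $1-ac$ with $1-ba$, the natural move is to keep $a,b,c$ as given and set $d:=a$; then $1-bd=1-ba$, which is exactly the element on the right-hand side of the claimed equivalence. (There is no distinctness requirement on the parameters in Theorem 3.2, so the substitution $d=a$ is legitimate.)

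It then remains only to verify that, under this substitution, both hypotheses of Theorem 3.2 collapse to the single assumption $aba=aca$ of the corollary. Indeed, $acd=dbd$ becomes $ac\cdot a=a\cdot b\cdot a$, i.e. $aca=aba$; and $dba=aca$ becomes $a\cdot b\cdot a=aca$, i.e. $aba=aca$. Both are precisely the given relation $aba=aca$. Hence Theorem 3.2 applies directly and yields that $1-ac$ has generalized n-strongly Drazin inverse if and only if $1-ba$ has generalized n-strongly Drazin inverse, which is the assertion; the "if" direction needs no separate argument since Theorem 3.2 already provides a two-sided equivalence. I do not anticipate any real obstacle here: the only point requiring a little care is to make the substitution $d=a$ consistently in every occurrence, so as to be sure that nothing stronger than $aba=aca$ is being imposed.
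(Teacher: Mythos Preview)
Your proposal is correct and follows exactly the same approach as the paper: the paper's proof of this corollary consists of the single line ``By choosing $d=a$, we complete the proof by Theorem 3.2.'' Your verification that both hypotheses $acd=dbd$ and $dba=aca$ reduce to $aba=aca$ under this substitution is exactly the check that justifies this application.
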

\begin{proof} By choosing $d=a$, we complete the proof by Theorem 3.2.\end{proof}

\begin{thm} Let $a,b,c,d\in \mathcal{A}$ satisfy $acd=dbd$ and $bdb=bac$. If $1-ac$ has generalized n-strongly Drazin inverse, then $1-bd$ has generalized n-strongly Drazin inverse.\end{thm}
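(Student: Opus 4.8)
The plan is to mimic the one-directional part of the proof of Theorem 3.2, converting the $n$-strong Drazin problem into a classical (generalized) strongly Drazin problem via Lemma 3.1, and then invoking the known one-directional Jacobson-type result for generalized strongly Drazin inverse under the hypotheses $acd=dbd$, $bdb=bac$. First I would set $\alpha=(1-ac)^n$ and $\beta=(1-bd)^n$ and, exactly as in Theorem 3.2, expand by the binomial theorem to write $\alpha=1-ac'$ and $\beta=1-b'd$, where $c'=\sum_{i=1}^n(-1)^{i-1}\binom{n}{i}c(ac)^{i-1}$ and $b'=\sum_{i=1}^n(-1)^{i-1}\binom{n}{i}(bd)^{i-1}b$. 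The computations $ac'd=db'd$ (which needs only $acd=dbd$, hence $(ac)^i d=(db)^i d$ for all $i$, by an easy induction using $acd=dbd$) go through verbatim from Theorem 3.2.

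The new ingredient is translating the second hypothesis $bdb=bac$ into a relation between $b'$ and $c'$ at the level of $b'd b'$ versus $b' a c'$ (or whatever the correct analogue is for the asymmetric Jacobson lemma for strongly Drazin inverses — i.e. the one-sided version of \cite[Theorem 2.4]{ZC} / the result behind \cite[Corollary 2.5]{Y}). So the key step is: from $acd=dbd$ and $bdb=bac$ derive $b'db'=b'ac'$. I would prove this by first establishing the auxiliary identities $(bd)^i b = b(ac)^{i-\text{?}}\cdots$; concretely, $bdb=bac$ gives $b(db)^{i} = b(ac)^{i}$-type relations, and combined with $acd=dbd$ one pushes $d$'s and $a$'s through the sums defining $b'$ and $c'$ to match $b'db'$ with $b'ac'$. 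This is the analogue of the "$ac'd=db'd,\ db'a=ac'a$" verification in Theorem 3.2, but now producing the asymmetric pair needed for the one-way implication.

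Once the pair of identities for $b',c',a,d$ is in hand, the endgame is automatic: Lemma 3.1 says $1-ac$ having generalized $n$-strongly Drazin inverse is equivalent to $\alpha=(1-ac)^n$ having generalized strongly Drazin inverse; the one-directional asymmetric Jacobson lemma for generalized strongly Drazin inverses (the $\Rightarrow$ half of the relevant result in \cite{ZC}, valid under $ac'd=db'd$ and $b'db'=b'ac'$) then yields that $\beta=(1-bd)^n$ has generalized strongly Drazin inverse; and Lemma 3.1 again converts this back to: $1-bd$ has generalized $n$-strongly Drazin inverse.

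The main obstacle I expect is the second identity verification — getting the algebra of the binomial sums $b'$ and $c'$ to cooperate with the \emph{asymmetric} hypothesis $bdb=bac$ (as opposed to the symmetric $dba=aca$ of Theorem 3.2). One must be careful that the induction producing the intertwining relations from $acd=dbd$ and $bdb=bac$ actually closes in the right order (pushing $d$ to one side, $a$ to the other) so that the resulting relation is precisely the hypothesis required by the cited strongly-Drazin Jacobson lemma; a sign error or an off-by-one in the exponents in the binomial expansion is the most likely place for the argument to stall.
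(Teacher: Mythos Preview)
Your proposal is correct and matches the paper's proof essentially line for line: set $\alpha=(1-ac)^n=1-ac'$ and $\beta=(1-bd)^n=1-b'd$ with the same $b',c'$, verify $ac'd=db'd$ (from $acd=dbd$) and $b'db'=b'ac'$ (the paper does this using only $bdb=bac$, via the induction $(bd)^ib=b(ac)^i$), then apply Lemma~3.1, \cite[Theorem~2.4]{ZC}, and Lemma~3.1 again. The only refinement is that the second identity does not in fact require $acd=dbd$; otherwise your anticipated obstacle and its resolution are exactly what the paper carries out.
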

\begin{proof} Let $\alpha=(1-ac)^n$ and $\beta=(1-bd)^n$. As in the proof of Theorem 3.2, we have
$$\alpha=1-ac', \beta=1-b'd,$$ where $$c'=\sum\limits_{i=1}^{n}(-1)^{i-1}\left(
\begin{array}{c}
n\\
i
\end{array}
\right)c(ac)^{i-1}, b'=\sum\limits_{i=1}^{n}(-1)^{i-1}\left(
\begin{array}{c}
n\\
i
\end{array}
\right)(bd)^{i-1}b.$$
Also we easily check that $ac'd=db'd.$ Furthermore, we verify
$$\begin{array}{ll}
&b'db'\\
=&\big(\sum\limits_{i=1}^{n}(-1)^{i-1}\left(
\begin{array}{c}
n\\
i
\end{array}
\right)(bd)^{i-1}b\big)d\big(\sum\limits_{i=1}^{n}(-1)^{i-1}\left(
\begin{array}{c}
n\\
i
\end{array}
\right)(bd)^{i-1}b\big)\\
=&\big(\sum\limits_{i=1}^{n}(-1)^{i-1}\left(
\begin{array}{c}
n\\
i
\end{array}
\right)(bd)^{i-1}\big)\big(\sum\limits_{i=1}^{n}(-1)^{i-1}\left(
\begin{array}{c}
n\\
i
\end{array}
\right)(bd)^{i}b\big)\\
=&\big(\sum\limits_{i=1}^{n}(-1)^{i-1}\left(
\begin{array}{c}
n\\
i
\end{array}
\right)(bd)^{i-1}\big)\big(\sum\limits_{i=1}^{n}(-1)^{i-1}\left(
\begin{array}{c}
n\\
i
\end{array}
\right)b(ac)^{i}\big)\\
=&\big(\sum\limits_{i=1}^{n}(-1)^{i-1}\left(
\begin{array}{c}
n\\
i
\end{array}
\right)(bd)^{i-1}b\big)a\big(\sum\limits_{i=1}^{n}(-1)^{i-1}\left(
\begin{array}{c}
n\\
i
\end{array}
\right)c(ac)^{i-1}\big)\\
=&b'ac'.
\end{array}$$
Hence $ac'd=db'd$ and $b'db'=b'ac'.$
In view of~\cite[Lemma 2.1]{ZC}, $\alpha\in R$ has generalized strongly Drazin inverse.
By virtue of~\cite[Theorem 2.4]{ZC}, $\beta\in R$ has strongly generalized Drazin inverse.
According to Lemma 3.1, $1-bd$ has generalized n-strongly Drazin inverse, as asserted.

$\Longleftarrow$ This is symmetric.\end{proof}

Finally, we present an illustrating example which can be derived from Theorem 3.2, but not from Theorem 3.4.

\begin{exam} Let $$\begin{array}{c}
a=\left(
\begin{array}{cccc}
0&0&1&1\\
0&0&0&1\\
0&0&0&0\\
0&0&0&0
\end{array}
\right), b=c=\left(
\begin{array}{cccc}
1&0&1&0\\
0&1&0&1\\
1&1&1&0\\
0&0&0&1
\end{array}
\right),\\
d=\left(
\begin{array}{cccc}
1&1&0&1\\
0&0&0&0\\
0&0&0&0\\
0&0&0&0
\end{array}
\right)\in M_4({\Bbb C}).
\end{array}$$ Then $acd=dbd$ and $dba=aca$, while $bdb\neq bac$. In this case, $$\begin{array}{c}
I_4-ac=\left(
\begin{array}{cccc}
0&-1&-1&-1\\
0&1&0&-1\\
0&0&1&0\\
0&0&0&1
\end{array}
\right),\\
I_4-bd=\left(
\begin{array}{cccc}
0&-1&0&-1\\
0&1&0&0\\
-1&-1&1&-1\\
0&0&0&1
\end{array}
\right)
\end{array}$$ both have generalized 1-strongly Drazin inverses.\end{exam}

\vskip10mm


\begin{thebibliography}{99} \bibitem{B} M. Boumazgour, Generalized Drazin inverse of restrictions of bounded linear operators,
{\it Linear Multilinear Algebra}, {\bf 66}(2018), 894--901.

\bibitem{CS} H. Chen; S. Saeed and M. Sheibani, Strongly Drazin inverse in rings, {\it Operators and Matrices},
{\bf 13}(2019), 495-505.

\bibitem{CM} H. Chen and M. Sheibani, Generalized Hirano inverses in rings, {\it Comm. Algebra}, {\bf 47}(2019), 2967--2978.

\bibitem{CM1} H. Chen and M. Sheibani, The g-Hirano inverse in Banach algebras, {\it Linear Multilinear Algebra}, 2019, https://doi.org/10.1080/03081087.2019. 1627278.

    \bibitem{CM2} H. Chen and M. Sheibani, Generalized Hirano inverses in Banach algebras, {\it Filomat}, {\bf 33}(2019), to appear.

\bibitem{G} O. Gurgun, Properties of generalized strongly Drazin invertible
elements in general rings, {\it J. Algebra Appl.}, 16, 1750207 (2017) [13 pages], DOI: 10.1142/S0219498817502073.

\bibitem{J} Y. Jiang; Y. Wen and Q. Zeng, Generalizations of Cline's formula for three generalized inverses, {\it Revista de la UMA},
{\bf 58}(2017), 127--134.

\bibitem{LZ} H. Lian and Q. Zeng, An extension of Cline's formula for generalized Drazin inverse, {\it Turk. J. Math}
{\bf 40} (2016) 161-165.

\bibitem{Ma} X. Mary, Weak inverses of products - Cline's formula meets Jacobson lemma, {\it J. Algebra Appl.},
{\bf 17}(2018), DOI: 10.1142/S021949881850069X.

\bibitem{MZ} V.G. Miller and H. Zguitti, New extensions of Jacobson's lemma and Cline's formula, {\it Rend. Circ. Mat. Palermo, II. Ser.},
{\it 67}(2018), 105--114.

\bibitem{Mo} D. Mosic, Reverse order laws for the generalized strongly Drazin inverses, {\it Appl. Math. Comput.},
{\bf 284}(2016), 37--46.

\bibitem{Mos} D. Mosic, Extensions of Jacobson's lemma for Drazin inverses, {\it Aequat. Math.}, {\bf 91}(2017), 419--428.

\bibitem{M} D. Mosic, The generalized and pseudo n-strong Drazin inverses in rings,
{\it Linear Multilinear Algebra}, https://doi.org/10.1080/03081087.2019. 1599806.

\bibitem{W} Z. Wang, A class of Drazin inverses in rings, {\it Filomat}, {\bf 31}(2017), 1781--1789.

\bibitem{Y1} K. Yan and X.C. Fang, Common properties of the operator products in local spectral theory,
{\it Acta Math. Sinica, English Series}, {\bf 31}(2015), 1715--1724.

\bibitem{Y} K. Yan; Q. Zeng and Y. Zhu, Generalized Jacobson's
lemma for Drazin inverses and its applications, {\it Linear Multilinear Algebra}, 2018, DOI:
10.1080/03081087.2018.1498828.

\bibitem{Z} Q. Zeng; Z. Wu and Y. Wen, New extensions of Cline's formula for generalized inverses,
{\it Filomat}, {\bf 31}(2017), 1973--1980.

\bibitem{ZZ} Q. Zeng and H. Zhong, Common properties of bounded linear operators $AC$ and $BA$,
{\it J. Math. Analysis Appl.}, {\bf 414}(2014), 553--560.

\bibitem{ZC} G.F. Zhuang; J.L. Chen and J. Cui, Jacobson's Lemma for the generalized Drazin inverse,
{\it  Linear Algebra Appl.}, {\bf 436}(2012), 742--746.

\bibitem{ZM} H. Zou; D. Mosic; K. Zuo and Y. Chen, On the n-strong Drazin invertibility in rings, {\it Turk. J. Math.}, 2019, doi:10.3906/mat-1905-50.

\end{thebibliography}
\end{document}